\title{An elementary proof of the symplectic spectral theorem}
\author{Camilo Sanabria Malag\' on\footnote{Partially supported by Vicerrector\'ia de Investigaciones de la Universidad de los Andes grant PEP P13.160422.030 FAPA-Camilo Sanabria.}\\Department of Mathematics\\ Universidad de los Andes, Bogot\'a, Colombia}
\date{}
\begin{document}

\theoremstyle{plain} \newtheorem{theorem}{Theorem}
\theoremstyle{plain} \newtheorem{proposition}[theorem]{Proposition}
\theoremstyle{plain} \newtheorem{lemma}[theorem]{Lemma}
\theoremstyle{plain} \newtheorem{corollary}[theorem]{Corollary}
\theoremstyle{plain} \newtheorem{example}{Example}
\theoremstyle{plain} \newtheorem{remark}[example]{Remark}

\maketitle

\begin{abstract}
The classical spectral theorem completely describes self-adjoint operators on finite dimensional inner product vector spaces as linear combinations of orthogonal projections onto pairwise orthogonal subspaces. We prove a similar theorem for self-adjoint operators on finite dimensional symplectic vector spaces over perfect fields. We show that these operators decompose according to a polarization, i.e. as the product of an operator on a lagrangian subspace and its dual on a complementary lagrangian. Moreover, if all the eigenvalues of the operator are in the base field, then there exist a Darboux basis such that the matrix representation of the operator is two-by-two blocks block-diagonal, where the first block is in Jordan normal form and the second block is the transpose of the first one.
\end{abstract}

\section{Introduction}

Let $V$ be a finite dimensional vector space and let $f$ be a linear operator on $V$. Given a basis, we can associate a square matrix to this linear transformation. Furthermore, to describe $f$, we can choose a particular basis such that the transformation matrix has a specific normal form.

If we endow $V$ with additional structure, a natural question arises: what is the relationship of $f$ with this structure? A classical result of this type is the celebrated spectral theorem which characterizes self-adjoint operators in finite dimensional inner product spaces as diagonalizable with respect to an orthonormal basis.

Let $V$ be a symplectic vector space over $\mathbb{R}$ or over $\mathbb{C}$. A corollary of the spectral theorem characterizes self-skew-adjoint operators on $V$, known as hamiltonian operators. These operators are central to the theory, for they form the Lie algebra of the Lie group of operators preserving the symplectic structure. The result states that hamiltonian operators can be seen as the composition of a self-adjoint operator with respect to an inner product and an operator whose square is the opposite of the identity. 

Now, let $V$ be a symplectic vector space over a perfect field. Given a lagrangian subspace we can identify $V$ as the product of this subspace and its dual. This is known as a polarization of $V$. In this paper we will show that self-adjoint operators on $V$ with respect to the symplectic form decompose in a similar manner, i.e. they are the product of an operator and its dual (Corollary \ref{thetheorem}). This corollary is a coordinate free version of the main result of this paper.

\begin{theorem}
Let $\mathbb{K}$ be a field with $\textrm{char}(\mathbb{K})\ne 2$. Let $n\in\mathbb{Z}_{>0}$ and let $\Omega\in M_{2n\times 2n}(\mathbb{K})$ be the $2\times 2$-blocks matrix
$$ \Omega=\left[\begin{array}{cc} O_n & I_n\\ -I_n & O_n \end{array}\right], $$
where $I_n\in M_{n\times n}(\mathbb{K})$ is the identity matrix and $O_n\in M_{n\times n}(\mathbb{K})$ is the zero matrix. If $A\in M_{2n\times 2n}(\mathbb{K})$ is such that $$A^T=\Omega A\Omega^{-1},$$
then
\begin{itemize}
\item[i)] If all the eigenvalues of $A$ are in $\mathbb{K}$, then there exist a symplectic matrix $C$ such that
$$ C^{-1}AC=\left[\begin{array}{cc} B & O_n\\ O_n & B^T \end{array}\right], $$
for some $B\in M_{n\times n}(\mathbb{K})$ in Jordan normal form.
\item[ii)] If $\mathbb{K}$ is perfect, then there exist a symplectic matrix $C$ such that
$$ C^{-1}AC=\left[\begin{array}{cc} B & O_n\\ O_n & B^T \end{array}\right], $$
for some $B\in M_{n\times n}(\mathbb{K})$.
\end{itemize} 
\end{theorem}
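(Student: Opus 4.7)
The relation $A^T=\Omega A\Omega^{-1}$ says exactly that $A$ is self-adjoint for the symplectic pairing $\omega(x,y)=x^T\Omega y$: $\omega(Ax,y)=\omega(x,Ay)$. My plan is to build the required Darboux basis inductively by splitting off small $A$-invariant symplectic subspaces of the form $\mathbb{K}[A]v\oplus\mathbb{K}[A]w$, where $v$ is a cyclic generator of maximal period and $w$ is a ``symplectic dual'' to it.

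For part (i), I first reduce to the nilpotent case. If $\lambda\neq\mu$ are eigenvalues of $A$ in $\mathbb{K}$, then $A-\lambda I$ is invertible on the generalized eigenspace $V_\mu$, so any $y\in V_\mu$ equals $(A-\lambda I)^Ny'$ for some $y'\in V_\mu$; for $x\in V_\lambda$ killed by $(A-\lambda I)^N$, self-adjointness pushes the power onto $x$ and forces $\omega(x,y)=0$. Hence $V=\bigoplus_\lambda V_\lambda$ is a symplectic orthogonal decomposition, each summand symplectic and $A$-invariant, and on $V_\lambda$ the operator $A-\lambda I$ is nilpotent and self-adjoint. This reduces the problem to the nilpotent case block by block.

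The key observation in the nilpotent case is that every cyclic subspace $\mathbb{K}[A]v$ is isotropic: self-adjointness gives $\omega(v,p(A)v)=\omega(p(A)v,v)$ while antisymmetry gives $\omega(p(A)v,v)=-\omega(v,p(A)v)$, and $\operatorname{char}\mathbb{K}\neq2$ kills the pairing; iterating self-adjointness extends this to $\omega(p(A)v,q(A)v)=\omega(v,(pq)(A)v)=0$. Choose $v$ with $A^{k-1}v\neq0$ for $k$ the nilpotency index. The identity $(\ker A^{k-1})^\perp=\operatorname{im}A^{k-1}$, again a direct consequence of self-adjointness and non-degeneracy of $\omega$, combined with $v\notin\ker A^{k-1}$, produces a $w$ such that $\omega(v,A^{k-1}w)\neq0$; rescale to $1$. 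In the basis $A^iv,\,A^jw$ the matrix of $\omega$ on $W=\mathbb{K}[A]v+\mathbb{K}[A]w$ is block off-diagonal with Hankel blocks having $\omega(v,A^{k-1}w)=1$ on the anti-diagonal, so its determinant is $\pm1$ and $W$ is a $2k$-dimensional $A$-invariant symplectic subspace in which $\mathbb{K}[A]v$ and $\mathbb{K}[A]w$ are complementary Lagrangians.

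Set $e_i=A^{k-i}v$ and let $f_1,\dots,f_k\in\mathbb{K}[A]w$ be dual to the $e_i$ under $\omega$ (such a basis exists because the Lagrangian complement $\mathbb{K}[A]w$ is $\omega$-dual to $\mathbb{K}[A]v$). Then $Ae_i=e_{i-1}$ is a single nilpotent Jordan block $J_k$, and self-adjointness gives $\omega(e_i,Af_j)=\omega(Ae_i,f_j)=\delta_{i-1,j}$, which forces $Af_j=f_{j+1}$, so the $f$-block is $J_k^T$. Since $A$ is self-adjoint, $W^\perp$ is $A$-invariant, symplectic, and strictly smaller, so the induction closes and gluing pieces yields $A=\operatorname{diag}(B,B^T)$ with $B$ in Jordan normal form. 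For part (ii), the reduction to a single eigenvalue is replaced by the primary decomposition of $V$ indexed by the distinct irreducible factors $\pi_i$ of the minimal polynomial of $A$ (separable since $\mathbb{K}$ is perfect); primary components are pairwise symplectically orthogonal by the invertibility argument applied to $\pi_i(A)$ in place of $A-\lambda I$, and the cyclic construction is carried out with $\pi_i(A)$ replacing powers of $A$, after dropping the requirement that $B$ be Jordan. I expect the main technical obstacle to be the simultaneous verification that $(v,w)$ yields a symplectic subspace \emph{and} that its dual basis realises exactly the transpose block, both of which pivot on the Hankel identity $\omega(A^iv,A^jw)=\omega(v,A^{i+j}w)$ provided by self-adjointness.
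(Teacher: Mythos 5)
Your part (i) follows essentially the paper's route: primary (generalized eigenspace) decomposition into pairwise $\omega$-orthogonal, $f$-invariant symplectic summands, then the nilpotent cyclic construction. Your way of producing the Darboux basis --- take the $\omega$-dual basis in the complementary Lagrangian $\mathbb{K}[A]w$ rather than the paper's explicit recursion $v^{(k+1)}=v^{(k)}-\sigma(v^{(k)},u_k)f^{k}(v^{(k)})$ --- is slightly cleaner but conceptually identical, and your verification that the dual basis yields exactly the transpose block is correct.

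Your part (ii) is a \emph{genuinely different route}. The paper extends scalars to the splitting field $\mathbb{K}_f$ of the irreducible factor, applies part (i) there, arranges the Darboux bases compatibly under the Galois action, and then descends; you instead propose to run the cyclic construction directly over $\mathbb{K}$ with $\pi(A)$ playing the role that $A$ played in the nilpotent case. This is a more elementary strategy and avoids Galois descent entirely, but the step you flag as ``the main technical obstacle'' is a real gap as written: the Hankel-determinant argument does not carry over when $\deg\pi>1$. In the nilpotent case the Gram matrix $\bigl(\omega(A^i v,A^j w)\bigr)$ is anti-triangular because $A^{i+j}=0$ for $i+j\geq k$; when $\pi(A)^k=0$ but $A$ itself is invertible, the entries $\omega(v,A^{i+j}w)$ do not vanish above the anti-diagonal and the determinant is not visibly $\pm1$. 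The correct replacement is ideal-theoretic: the pairing factors through the form $(p,q)\mapsto\phi(pq)$ on $R=\mathbb{K}[t]/(\pi^k)$, where $\phi(p)=\omega(v,p(A)w)$; its radical is an ideal of $R$, so if nonzero it contains the unique minimal ideal $(\pi^{k-1})$, but $\phi(\pi^{k-1})=\omega(v,\pi(A)^{k-1}w)\neq 0$ by your choice of $w$ --- contradiction. One must also check that $\mathbb{K}[A]w$ has the same dimension $k\deg\pi$ as $\mathbb{K}[A]v$, which follows because $\pi(A)^{k-1}w\neq 0$ for the same reason. With that lemma in place your construction closes, so I would call this a fillable gap rather than a dead end; but as stated the nondegeneracy of the restricted form is asserted, not proved, and that is precisely the point where the paper reaches for Galois descent instead.
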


A slightly different result has already been proven for an arbitrary field $\mathbb{K}$ with $\textrm{char}(\mathbb{K})\ne 2$ in \cite{S} using quivers. An alternative proof of that result was given in \cite{LR} for the case $\mathbb{K}=\mathbb{R}$ using pencils. Here we present a constructive proof using only elementary methods. We obtain that in the case when $\mathbb{K}$ is perfect and $\textrm{char}(\mathbb{K})\ne 2$ the block diagonal form can be obtained with respect to a Darboux basis. Furthermore, we consider the case when the base field $\mathbb{K}$ contains all the eigenvalues of the operator and we show that in this case we can choose the Darboux basis so that the first block is in Jordan normal form and the second block is the transpose of the first one.

\subsection{Symplectic adjoint}

Let $\mathbb{K}$ be a field with $\textrm{char}(\mathbb{K})\ne 2$ and let $V$ be a $\mathbb{K}$-vector space. A symplectic form over $V$ is an alternating non-degenerate bilinear form $\sigma:V\times V\rightarrow\mathbb{K}$. The pair $(V,\sigma)$ is called a symplectic vector space. 

Now, let us assume that $V$ has a finite dimension. Then its dimension is even, say $2n$, for some $n\in\mathbb{Z}_{>0}$ and $V$ admits a Darboux basis $\{u_1,\ldots,u_n,w_1,\ldots,w_n\}$ where $\sigma(u_i,u_j)=0$, $\sigma(w_i,w_j)=0$, and  $\sigma(w_i,u_j) =\delta_{ij}$  for $i,j\in\{1,\ldots,n\}$. Here $\delta_{ij}=1$ if $i=j$ and $\delta_{ij}=0$ if $i\ne j$. For any $v\in V$ we have
\[
v=\sum_{i=1}^n\sigma(w_i,v)u_i-\sigma(u_i,v)w_i.
\]

Let $f:V\rightarrow V$ be a $\mathbb{K}$-linear map. We say that a $\mathbb{K}$-linear map $g:V\rightarrow V$ is the adjoint of $f$ if $\sigma(v,f(w))=\sigma(g(v),w)$ for all $v,w\in V$.
It is defined by
$$g(v)=\sum_{i=1}^n\sigma(f(w_i),v)u_i-\sigma(f(u_i),v)w_i,$$
for $v\in V$.
If $g=f$, then $f$ is a self-adjoint operator.

\begin{lemma}\label{canexample}
Let $U$ be an $n$-dimensional $\mathbb{K}$-vector space and let $U^*$ be its dual. We consider the symplectic vector space $(U\times U^*,\omega)$ where for $v,w\in U$, $\lambda,\mu\in U^*$
$$\omega\Big((v,\lambda),(w,\mu)\Big)=\lambda(w)-\mu(v).$$
If $l:U\rightarrow U$ is a $\mathbb{K}$-linear transformation, then 
\begin{align*}
(l,l^*): U\times U^* & \longrightarrow  U\times U^*\\
 (v,\lambda) & \longmapsto  (l(v),l^*(\lambda))
\end{align*}
is a self-adjoint operator.
\end{lemma}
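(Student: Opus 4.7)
The plan is to verify the self-adjointness condition $\omega\bigl((v,\lambda),(l,l^*)(w,\mu)\bigr)=\omega\bigl((l,l^*)(v,\lambda),(w,\mu)\bigr)$ by a direct computation, using only the definitions of $\omega$ and of the dual map $l^*:U^*\to U^*$, which satisfies $l^*(\lambda)(v)=\lambda(l(v))$ for all $v\in U$, $\lambda\in U^*$.

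First I would expand the left-hand side: by the definition of $(l,l^*)$, it equals $\omega\bigl((v,\lambda),(l(w),l^*(\mu))\bigr)$, which by the definition of $\omega$ equals $\lambda(l(w))-l^*(\mu)(v)$. Then I would rewrite the second term using the defining property of the dual map, obtaining $\lambda(l(w))-\mu(l(v))$.

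Next I would expand the right-hand side analogously: it equals $\omega\bigl((l(v),l^*(\lambda)),(w,\mu)\bigr)=l^*(\lambda)(w)-\mu(l(v))$, and applying the definition of $l^*$ to the first term yields $\lambda(l(w))-\mu(l(v))$. Comparing the two expressions shows they coincide, so $(l,l^*)$ is its own adjoint.

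There is no real obstacle here; the statement is essentially a restatement of the defining property of the dual map, and the only thing to be careful about is tracking the sign convention in $\omega$ and placing $l^*$ on the correct argument when applying its definition. The lemma will serve later as the model example of a self-adjoint operator arising from a polarization, motivating the form of the main theorem.
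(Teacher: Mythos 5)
Your proof is correct and matches the paper's: both expand the symplectic form and invoke the defining property $l^*(\lambda)(v)=\lambda(l(v))$ of the dual map. The paper writes it as a single chain of equalities starting from the right-hand side, while you expand both sides to a common expression $\lambda(l(w))-\mu(l(v))$, but this is only a cosmetic difference.
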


\begin{proof} For any $v,w\in U$, $\lambda,\mu\in U^*$ we have
\begin{align*}
\omega\Big((l(v),l^*(\lambda)),(w,\mu)\Big) &= l^*(\lambda)(w)-\mu(l(v))\\
 &= \lambda(l(w))-l^*(\mu)(v)\\
 &= \omega\Big((v,\lambda),(l(w),l^*(\mu))\Big).
\end{align*}
\end{proof}

\begin{lemma}\label{remarkcanexample}
Let $(V,\sigma)$ be a finite dimensional symplectic $\mathbb{K}$-vector space with $\dim(V)=2n$, $n\in\mathbb{Z}_{>0}$ and let $\{u_1,\ldots,u_n,w_1,\ldots,w_n\}$ be a Darboux basis of $V$. Let $U=\textrm{Span}(u_1,\ldots,u_n)$ and let $\{\lambda_1,\ldots,\lambda_n\}$ be the basis of $U^*$ dual to $\{u_1,\ldots,u_n\}$, i.e. $\lambda_i(u_j)=\delta_{ij}$. Then the $\mathbb{K}$-linear transformation
\[
\begin{array}{rcl}
\Phi: U\times U^* & \longrightarrow & V \\
        (u_i,0) & \longmapsto & u_i \\
        (0,\lambda_i) & \longmapsto & w_i,
\end{array}
\]
for $i=1,\ldots,n$, is an isomorphism between the symplectic $\mathbb{K}$-vector spaces $(U\times U,\omega)$ and $(V,\sigma)$, where $\omega$ is the symplectic form from Lemma \ref{canexample}.
\end{lemma}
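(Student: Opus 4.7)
The plan is to verify the two defining properties of a symplectic isomorphism: that $\Phi$ is a linear bijection, and that it intertwines $\omega$ and $\sigma$.

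First I would observe that $\Phi$ is a $\mathbb{K}$-linear bijection essentially for free: the vectors $(u_i,0)$ and $(0,\lambda_j)$ form a basis of $U\times U^*$ because $\{u_1,\ldots,u_n\}$ is a basis of $U$ and $\{\lambda_1,\ldots,\lambda_n\}$ is the dual basis of $U^*$, and $\Phi$ sends this basis to the Darboux basis $\{u_1,\ldots,u_n,w_1,\ldots,w_n\}$ of $V$, which is also a basis. So a unique such linear $\Phi$ exists, and it is an isomorphism of vector spaces.

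Next I would check that $\omega(x,y)=\sigma(\Phi(x),\Phi(y))$ for all $x,y$ in $U\times U^*$. By bilinearity and skew-symmetry of both forms, it is enough to check this on pairs of basis vectors of $U\times U^*$. There are essentially three cases: pairs of the form $((u_i,0),(u_j,0))$, pairs of the form $((0,\lambda_i),(0,\lambda_j))$, and mixed pairs $((u_i,0),(0,\lambda_j))$. In the first case, $\omega$ gives $0(u_j)-0(u_i)=0$, and $\sigma(u_i,u_j)=0$ by the Darboux relations. In the second case, $\omega$ gives $\lambda_i(0)-\lambda_j(0)=0$, and $\sigma(w_i,w_j)=0$ similarly. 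In the mixed case, $\omega((u_i,0),(0,\lambda_j))=0(0)-\lambda_j(u_i)=-\delta_{ij}$ while $\sigma(u_i,w_j)=-\sigma(w_j,u_i)=-\delta_{ij}$. All three cases agree, so $\Phi$ is symplectic.

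There is no real obstacle here; the lemma is essentially a translation statement saying that once we fix a Darboux basis, the canonical symplectic structure on $U\times U^*$ is realized by identifying $u_i\leftrightarrow(u_i,0)$ and $w_i\leftrightarrow(0,\lambda_i)$. The only subtlety to be careful about is the sign convention in $\omega$ and the ordering $\sigma(w_i,u_j)=\delta_{ij}$ (not $\sigma(u_i,w_j)=\delta_{ij}$), which is what forces the identification $w_i\leftrightarrow(0,\lambda_i)$ rather than a variant with a sign flip.
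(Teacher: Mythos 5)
Your proof is correct and takes essentially the same approach as the paper, which simply asserts $\sigma(\Phi(v,\lambda),\Phi(w,\mu))=\omega\bigl((v,\lambda),(w,\mu)\bigr)$ without further detail; you spell out the verification on basis vectors, including the sign check $\sigma(u_i,w_j)=-\delta_{ij}=\omega\bigl((u_i,0),(0,\lambda_j)\bigr)$, which is the only nontrivial step.
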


\begin{proof} For any $v,w\in U$, $\lambda,\mu\in U^*$ we have
$$\sigma\big(\Phi(v,\lambda),\Phi(w,\mu)\big)=\omega\Big((v,\lambda),(w,\mu)\Big).$$ 
\end{proof}

In this paper we show that if $\mathbb{K}$ is a perfect field,  then every self-adjoint operator on a finite dimensional symplectic $\mathbb{K}$-vector space can be seen, up to an isomorphism of symplectic vector spaces, as the operator in Lemma \ref{canexample} for some $\mathbb{K}$-linear transformation $l$.

\section{Preliminary}

Let $(V,\sigma)$ be a symplectic vector space and let $\dim(V)=2n$, $n\in\mathbb{Z}_{>0}$. Given a subspace $W\subseteq V$, we define its $\sigma$-orthogonal complement by
\[
W^\sigma=\{v\in V|\ \sigma(v,w)=0\ \forall w\in W\}.
\]
Note that $\dim(W)+\dim(W^\sigma)=\dim(V)$.

We say that $W$ is a symplectic subspace of $V$ if $(W,\sigma|_W)$ is a symplectic space, where $\sigma|_W$ denotes the restriction of $\sigma$ to $W\times W$.

\begin{lemma}\label{lemma0}
$W\subseteq V$ is a symplectic subspace if and only if $W\cap W^\sigma=\{0\}$. In particular, $W\subseteq V$ is a symplectic subspace if and only if $W^\sigma$ is a symplectic subspace.
\end{lemma}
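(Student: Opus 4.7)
The plan is to unwind the definition of a non-degenerate bilinear form directly. By definition, $(W,\sigma|_W)$ is symplectic precisely when the restriction $\sigma|_W$ is non-degenerate on $W$, which means that the radical
\[
\mathrm{rad}(\sigma|_W)=\{v\in W\ |\ \sigma(v,w)=0\ \forall w\in W\}
\]
is zero. But this radical is exactly $W\cap W^\sigma$: on the one hand every element of $W\cap W^\sigma$ lies in $W$ and is $\sigma$-orthogonal to all of $W$; on the other, any $v\in W$ orthogonal to all of $W$ is both in $W$ and in $W^\sigma$. This gives the first equivalence.

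For the ``in particular'' clause, I would apply the first equivalence to $W^\sigma$ itself: $W^\sigma$ is symplectic iff $W^\sigma\cap (W^\sigma)^\sigma=\{0\}$. The key observation is that $(W^\sigma)^\sigma=W$. The inclusion $W\subseteq (W^\sigma)^\sigma$ is immediate from the definition (every $w\in W$ is $\sigma$-orthogonal to everything in $W^\sigma$), and equality follows by comparing dimensions using the formula $\dim(W)+\dim(W^\sigma)=\dim(V)$ stated just above the lemma, applied once to $W$ and once to $W^\sigma$. Hence $W^\sigma\cap(W^\sigma)^\sigma=W^\sigma\cap W$, and so $W^\sigma$ is symplectic iff $W$ is.

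There is essentially no obstacle: the entire argument is bookkeeping with the definition of the $\sigma$-orthogonal complement together with the dimension formula. The only tiny subtlety worth flagging explicitly in the write-up is justifying $(W^\sigma)^\sigma=W$, which relies on the non-degeneracy of $\sigma$ on the ambient space $V$ (through the dimension formula) rather than on any property of $\sigma|_W$.
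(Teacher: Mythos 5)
Your proof is correct. The core observation---that $W\cap W^\sigma$ is, by definition, exactly the radical of $\sigma|_W$, so the equivalence with non-degeneracy is immediate---is a genuine streamlining of what the paper does. The paper proves the forward direction the same way, but for the converse it takes a detour: it deduces $V=W\oplus W^\sigma$ from the hypothesis $W\cap W^\sigma=\{0\}$ together with the dimension formula, then uses the non-degeneracy of $\sigma$ on the ambient $V$ to produce, for each nonzero $v\in W$, a vector $w\in V$ with $\sigma(v,w)\ne 0$, and finally projects $w$ onto $W$ to conclude. Your one-line set identity makes that whole step unnecessary, and incidentally shows the first equivalence needs no non-degeneracy of $\sigma$ on $V$ at all (that hypothesis is only used in the ``in particular'' clause, via the dimension formula). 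For the ``in particular'' clause both proofs rest on $(W^\sigma)^\sigma=W$; you spell out the justification (the obvious inclusion plus two applications of the dimension formula), while the paper simply cites the fact.
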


\begin{proof}
The form $\sigma|_W$ inherits from $\sigma$ the properties of being bilinear and alternating. It suffices to show that it is non-degenerate if and only if  $W\cap W^\sigma=\{0\}$. 

We first assume that $\sigma|_W$ is non-degenerate. If $v\in W\cap W^\sigma$, then for every $w\in W$ we have $\sigma(v,w)=0$, hence $v=0$. 

Conversely, we suppose that $W\cap W^\sigma=\{0\}$. Since $\dim(W)+\dim(W^\sigma)=\dim(V)$, we have $V=W\oplus W^\sigma$. Now, given $v\in W$, $v\ne 0$, there exists $w\in V$ such that $\sigma(v,w)\ne 0$. In particular, if $w=w_0+w_1$ with $w_0\in W$ and $w_1\in W^\sigma$, then $\sigma(v,w_0)=\sigma(v,w)\ne0$. 

The second claim follows from the first and the fact that $\left(W^\sigma\right)^\sigma=W$.
\end{proof}

\subsection{Symplectic invariant subspaces}

Let $\{u_1,\ldots,u_n,w_1,\ldots,w_n\}$ be a Darboux basis of $V$ and let $J\subseteq\{1,\ldots,n\}$. Then $V_J=\textrm{Span}(u_j,w_j)_{j\in J}$ is a symplectic subspace of $V$ and $\sigma|_{V_J}$ is non-degenerate. Moreover, if $J_1, \ldots, J_r\subseteq\{1,\ldots,n\}$ are such that $J_1\cup\ldots\cup J_r=\{1,\ldots,n\}$ and $J_i\cap J_j=\emptyset$ whenever $i\ne j$, then
\[
V=V_{J_1}\oplus\ldots\oplus V_{J_r}.
\]

In the following lemma we show that given a self-adjoint operator $f$ on $V$, we can decompose $V$ into $f$-invariant symplectic subspaces that correspond to the irreducible factors of the characteristic polynomial of $f$. 

\begin{lemma}\label{sympdecom}
Let $(V,\sigma)$ be a finite dimensional symplectic $\mathbb{K}$-vector space with $\dim(V)=2n$, $n\in\mathbb{Z}_{>0}$ and let $f:V\rightarrow V$ be a self-adjoint operator. Then
\begin{itemize}
\item[i)]  If $f$ is a projection, i.e. $f^2=f$, then $f(V)$ is a symplectic subspace.
\item[ii)] Let $P(t)=\sum_{i=0}^da_it^i\in\mathbb{K}[t]$ be a polynomial and let $P(f)=\sum_{i=0}^da_if^i$. Then $P(f)$ is self-adjoint.
\item[iii)] Let $P_f(t)=\det(f-t\mathbbm{1}_V)$ be the characteristic polynomial of $f$ and let $P_1(t),\ldots,P_r(t)\in\mathbb{K}[t]$ be the monic irreducible, pairwise coprime factors of $P_f(t)$ so that
\[
P_f(t)=\prod_{i=1}^r P_i(t)^{m_i},\quad m_1,\ldots,m_r\in\mathbb{Z}_{>0}.
\]
Then for $i=1,\ldots,r$, $V_i=\ker(P_i(f)^{m_i})$ is an $f$-invariant symplectic subspace, i.e. $f(V_i)\subseteq V_i$ and
\[
V=V_1\oplus\ldots\oplus V_r.
\]
\end{itemize}
\end{lemma}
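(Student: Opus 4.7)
\medskip

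\noindent\textbf{Proof plan.}

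For part (i), the plan is to apply Lemma \ref{lemma0}: I only need to check that $f(V)\cap f(V)^\sigma=\{0\}$. Since $f^2=f$, standard linear algebra gives $V=f(V)\oplus\ker(f)$. Using that $f$ is self-adjoint, for any $v\in\ker(f)$ and $w\in V$ one has $\sigma(v,f(w))=\sigma(f(v),w)=0$, so $\ker(f)\subseteq f(V)^\sigma$. A dimension count via $\dim(f(V))+\dim(f(V)^\sigma)=2n=\dim(f(V))+\dim\ker(f)$ forces $\ker(f)=f(V)^\sigma$, hence $f(V)\cap f(V)^\sigma=f(V)\cap\ker(f)=\{0\}$.

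For part (ii), I would first observe that the set of self-adjoint operators is a $\mathbb{K}$-linear subspace of $\mathrm{End}(V)$ (from bilinearity of $\sigma$), and that $\mathbbm{1}_V$ is trivially self-adjoint. It remains to show that $f^k$ is self-adjoint for every $k\ge 0$, which I would do by induction using
\[
\sigma(v,f^{k+1}(w))=\sigma(f(v),f^k(w))=\sigma(f^{k+1}(v),w),
\]
where the first equality uses the self-adjointness of $f$ and the second uses the inductive hypothesis applied to the pair $(f(v),w)$.

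For part (iii), the $f$-invariance of each $V_i$ and the direct sum decomposition $V=V_1\oplus\cdots\oplus V_r$ are the standard primary decomposition theorem (which follows from Bezout's identity applied to the pairwise coprime polynomials $P_i(t)^{m_i}$), so I would only invoke that result. The new content is that each $V_i$ is symplectic, which by Lemma \ref{lemma0} amounts to $V_i\cap V_i^\sigma=\{0\}$. The plan is to show the stronger statement that $V_i$ and $V_j$ are $\sigma$-orthogonal when $i\ne j$; then $\bigoplus_{j\ne i}V_j\subseteq V_i^\sigma$, and a dimension count forces equality, so $V_i\cap V_i^\sigma=V_i\cap\bigoplus_{j\ne i}V_j=\{0\}$.

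The key step, and the only one requiring care, is the $\sigma$-orthogonality $\sigma(V_i,V_j)=0$ for $i\ne j$. Given $v\in V_i$ and $w\in V_j$, coprimality of $P_i^{m_i}$ and $P_j^{m_j}$ yields $A,B\in\mathbb{K}[t]$ with $A(t)P_i(t)^{m_i}+B(t)P_j(t)^{m_j}=1$; evaluating at $f$ and applying to $v$ gives $v=B(f)P_j(f)^{m_j}(v)$. The operator $B(f)P_j(f)^{m_j}$ is a polynomial in $f$, hence self-adjoint by (ii), so
\[
\sigma(v,w)=\sigma\bigl(B(f)P_j(f)^{m_j}(v),w\bigr)=\sigma\bigl(v,B(f)P_j(f)^{m_j}(w)\bigr)=\sigma(v,0)=0.
\]
The main (modest) obstacle is precisely this interplay between the Bezout coefficients and self-adjointness; once part (ii) is in hand, everything falls into place.
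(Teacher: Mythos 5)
Your parts (i) and (ii) are correct and match the paper's proof in substance: part (i) identifies $\ker(f)=f(V)^\sigma$ via self-adjointness and a dimension count, then invokes Lemma \ref{lemma0}; part (ii) is a direct computation (the paper simply writes $\sigma(v,f^i(w))=\sigma(f^i(v),w)$ in one line, which is exactly your induction collapsed).

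For part (iii) you take a genuinely different, and somewhat cleaner, route. The paper builds the Bezout projections $p_i=Q_i(f)\circ R_i(f)$ explicitly, verifies $\sum p_i=\mathbbm{1}_V$, $p_i\circ p_j=0$ for $i\ne j$ and $p_i^2=p_i$, shows $p_i(V)=V_i$, and then reduces to part (i): each $p_i$ is a polynomial in $f$, hence a self-adjoint projection by (ii), so $V_i=p_i(V)$ is symplectic by (i). You instead take the primary decomposition $V=\bigoplus_i V_i$ as known, and prove the stronger structural fact that $\sigma(V_i,V_j)=0$ for $i\ne j$; the dimension count then identifies $V_i^\sigma=\bigoplus_{j\ne i}V_j$, and $V_i\cap V_i^\sigma=\{0\}$ follows without ever passing through part (i). Both arguments are driven by the same engine, namely Bezout coefficients plus self-adjointness of polynomials in $f$ from (ii). Your version has the advantage of exhibiting the pairwise $\sigma$-orthogonality of the primary components explicitly, which is a slightly stronger and more informative statement than what the paper records; the paper's version is more self-contained in that it reconstructs the primary decomposition rather than citing it, and makes direct use of part (i) so that all three parts of the lemma hang together. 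Either is a complete and correct proof.
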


\begin{proof}

\begin{enumerate}
\item[i)] We have $V=f(V)\oplus\ker(f)$. By Lemma \ref{lemma0} it suffices to show that $\ker(f)=f(V)^\sigma$, for this implies $f(V)\cap f(V)^\sigma=\{0\}$.

If $v\in\ker(f)$, then for every $w\in V$ we have
\[
\sigma(f(w),v)=\sigma(w,f(v))=0.
\]
Hence $v\in f(V)^\sigma$, and so $\ker(f)\subseteq f(V)^\sigma$. Since $\dim(\ker(f))=V-\dim(f(V))=\dim(f(V)^\sigma)$ we obtain $\ker(f)=f(V)^\sigma$. 
\item[ii)] For every $v,w\in V$ we have
\begin{align*}
\sigma\big(v,P(f)(w)\big) &=\sum_{i=0}^da_i\sigma\big(v,f^i(w)\big)\\
  &=\sum_{i=1}^da_i\sigma\big(f^i(v),w\big)\\
  &=\sigma\big(P(f)(v),w\big).
\end{align*}
\item[iii)] Note that for any two polynomials $Q_1(t), Q_2(t)\in\mathbb{K}[t]$ we have $Q_1(f)\circ Q_2(f)=Q_2(f)\circ Q_1(f)$.

Let $R_i(t)=\prod_{j\ne i}P_j(t)^{m_j}$ for $i=1,\ldots,r$. We have $\textrm{gcd}\big(R_1(t),\ldots,R_d(t)\big)=1$. Let $Q_1(t)\ldots,Q_r(t)\in\mathbb{K}[t]$ be such that $\sum_{j=1}^r Q_j(t)R_j(t)=1$. Let $p_i=Q_i(f)\circ R_i(f)$. Then $\sum_{j=1}^r p_j=\mathbbm{1}_V$.\\
For every $i,j\in\{1,\ldots,r\}$, $i\ne j$ we have $Q_i(t)R_i(t)Q_j(t)R_j(t)=Q_{ij}(t)P_f(t)$ for some $Q_{ij}(t)\in\mathbb{K}[t]$. In particular $p_i\circ p_j=Q_{ij}(f)\circ P_f(f)=0$. Hence
\[
p_i=p_i\circ\mathbbm{1}_V=p_i\circ \sum_{j=1}^r p_j=\sum_{i=1}^r p_i\circ p_j=p_i^2.
\]
Thus $p_i$ is a projection and
\[
V=p_1(V)\oplus\ldots\oplus p_r(V).
\]
We have $P_i^{m_i}(t)R_i(t)=P_f(t)$ and
\[
P_i(f)^{m_i}\circ p_i=P_i(f)\circ Q_i(f)\circ R_i(f)=Q_i(f)\circ P_f(f)=0,
\]
hence $p_i(V)\subseteq V_i$. On the other hand, if $v\in V_i$, then $R_j(f)(v)=0$ for $j\ne i$, therefore $p_j(v)=0$ and $v=p_i(v)$. We have $p_i(V)=V_i$.\\
Since for $i=1,\ldots,r$ the operator $p_i$ is a polynomial in $f$, by ii) we have that $p_i$ is a self-adjoint projection. Furthermore i) implies that $V_i$ is a symplectic subspace of $V$. Since $p_i$ and $f$ commute, we obtain that $p_i(V)$ is $f$-invariant.
\end{enumerate}
\end{proof}

\subsection{Nilpotent self-adjoint operators}

Let $U$ be a subspace of $V$. We say that $U$ is isotropic if $U\subseteq U^\sigma$. If furthermore $U=U^\sigma$, then $U$ is a lagrangian subspace. Let $\{u_1,\ldots,u_n,w_1,\ldots,w_n\}$ be a Darboux basis of $V$ and let $J\subseteq\{1,\ldots,n\}$. Then $U_J=\textrm{Span}(u_j)_{j\in J}$ and $W_J=\textrm{Span}(w_j)_{j\in J}$ are isotropic subspaces. 

A subspace $U\subseteq V$ is called $f$-cyclic if there exists $u\in U$ such that $U=\textrm{Span}\big(f^i(u)\big)_{i\in\mathbb{Z}_{\ge 0}}$. In particular, $f$-cyclic subspaces are $f$-invariant. 

Let $f$ be a nilpotent self-adjoint operator on $V$ with nilpotency degree $d$. In the following lemma we show that for any $f$-cyclic subspace $U\subseteq V$ with $\dim(U)=d$ there exists an $f$-cyclic subspace $W\subseteq V$ with $\dim(W)=d$ such that $U\oplus W$ is a symplectic subspace of $V$.

\begin{lemma}\label{nilpotent}
Let $(V,\sigma)$ be a finite dimensional symplectic $\mathbb{K}$-vector space with $\dim(V)=2n$, $n\in\mathbb{Z}_{>0}$ and let $f:V\rightarrow V$ be a nilpotent self-adjoint operator with nilpotency degree $d$. Then
\begin{itemize}
\item[i)]  If $U\subseteq V$ is an $f$-cyclic subspace, then $U$ is isotropic.
\item[ii)] For any $f$-cyclic subspace $U\subseteq V$ with $\dim(U)=d$, there exists an $f$-cyclic subspace $W\subseteq V$ with $\dim(W)=d$ such that $U\cap W=\{0\}$ and $U\oplus W$ is a symplectic subspace of $V$. Furthermore there exists a Darboux basis $\{u_1,\ldots,u_d,w_1,\ldots,w_d\}$ of $U\oplus W$ where $\{u_1,\ldots,u_d\}$ is a basis of $U$ and $\{w_1,\ldots,w_d\}$ is a basis of $W$ such that
\[
u_i=f^{d-i}(u_d) \textrm{ and } w_i=f^{i-1}(w_1),\\
\]
for $i=1,\ldots,d$.
\end{itemize}
\end{lemma}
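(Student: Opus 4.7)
The plan is to prove (i) by a short symmetry argument and then to build the complement in (ii) via a duality argument, using (i) to ensure isotropy of both components.

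For (i), I would pick any generator $u$ of the $f$-cyclic subspace $U$ and reduce every pairing between basis vectors $\{u, f(u), f^2(u), \ldots\}$ to a single quantity. Moving $f$ across $\sigma$ repeatedly by self-adjointness gives $\sigma(f^i(u), f^j(u)) = \sigma(u, f^{i+j}(u))$. Swapping the roles of $i$ and $j$ yields the same expression, while the alternating property of $\sigma$ yields its negative. Since $\mathrm{char}(\mathbb{K}) \neq 2$, this forces $\sigma(f^i(u), f^j(u)) = 0$ for all $i, j$, so $U$ is isotropic by bilinearity.

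For (ii), since $U$ is $f$-cyclic with $\dim U = d$, there exists $u_d \in U$ such that $\{u_d, f(u_d), \ldots, f^{d-1}(u_d)\}$ is a basis of $U$ and $f^d(u_d) = 0$; setting $u_i = f^{d-i}(u_d)$ gives the required labeling. The central step is to produce a single vector $w_1 \in V$ with $\sigma(w_1, u_1) = 1$ and $\sigma(w_1, u_j) = 0$ for $j = 2, \ldots, d$, then define $w_i = f^{i-1}(w_1)$ and $W = \textrm{Span}(w_1, \ldots, w_d)$. Such a $w_1$ exists because $u_1, \ldots, u_d$ are linearly independent and $\sigma$ is non-degenerate, so the joint map $v \mapsto (\sigma(v, u_1), \ldots, \sigma(v, u_d))$ from $V$ to $\mathbb{K}^d$ is surjective.

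The main verification is then the Darboux cross-relation $\sigma(w_i, u_j) = \delta_{ij}$. Pushing $f^{i-1}$ through $\sigma$ yields $\sigma(w_i, u_j) = \sigma(w_1, f^{(d-1)+(i-j)}(u_d))$. When $i > j$ the exponent is at least $d$, so the expression vanishes; when $i = j$ it equals $\sigma(w_1, u_1) = 1$; and when $i < j$ it equals $\sigma(w_1, u_{j-i+1})$ with $j - i + 1 \geq 2$, hence zero by construction. In particular $\sigma(w_d, u_d) = 1$, so $f^{d-1}(w_1) \neq 0$, and a standard smallest-nonzero-coefficient argument then shows $w_1, f(w_1), \ldots, f^{d-1}(w_1)$ are linearly independent, so $W$ is $f$-cyclic of dimension $d$.

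Both $U$ and $W$ are then isotropic by (i), and combined with the computed cross-pairings this gives the full Darboux relations for $\{u_1, \ldots, u_d, w_1, \ldots, w_d\}$. Non-degeneracy of the resulting pairing table forces the $2d$ vectors to be linearly independent, so $U \cap W = \{0\}$, $\dim(U \oplus W) = 2d$, and $U \oplus W$ is the desired symplectic subspace. The only real obstacle I foresee is the index bookkeeping between $u_i = f^{d-i}(u_d)$ and $w_i = f^{i-1}(w_1)$, whose reversed conventions are precisely what make the cross-pairings equal $\delta_{ij}$.
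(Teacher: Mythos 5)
Your proof is correct and, for part (i), it is the same argument as the paper's. For part (ii) you take a genuinely different route at the key existence step. The paper begins with \emph{any} $v$ satisfying $\sigma(v,u_1)=1$ and then runs a recursive, Gram--Schmidt-style correction $v^{(k+1)}=v^{(k)}-c_k\,f^{k}(v^{(k)})$ to kill the unwanted pairings $\sigma(v,u_j)$ for $j\ge 2$, arriving at $w_1=v^{(d)}$ after $d-1$ steps; the verification is a fairly involved double induction. You instead produce $w_1$ in one shot: the linear map $v\mapsto\bigl(\sigma(v,u_1),\ldots,\sigma(v,u_d)\bigr)$ has kernel $U^{\sigma}$ of dimension $2n-d$, hence has image of dimension $d$ and is surjective onto $\mathbb{K}^{d}$, so a preimage of $(1,0,\ldots,0)$ exists. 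This replaces the recursion and its bookkeeping by a single rank--nullity count, which is shorter and arguably cleaner. After that, both proofs proceed identically: set $w_i=f^{i-1}(w_1)$, compute $\sigma(w_i,u_j)=\delta_{ij}$ by moving $f$ across $\sigma$, invoke (i) for isotropy of $U$ and $W$, and conclude that the pairing table is non-degenerate so the $2d$ vectors are independent and $U\oplus W$ is symplectic. (You are in fact a bit more explicit than the paper here: you record that $\sigma(w_d,u_d)=1$ forces $f^{d-1}(w_1)\ne 0$ and hence linear independence of the $w_i$, and that non-degeneracy of the Gram matrix forces $U\cap W=\{0\}$; the paper leaves these as implicit consequences of ``is a Darboux basis.'') One small thing each approach buys: the paper's $w_1$ is by construction a polynomial in $f$ applied to $v$, making $W$ visibly $f$-cyclic, whereas your $w_1$ is \emph{a priori} arbitrary and you must (and do) argue cyclicity separately from the nonvanishing of $f^{d-1}(w_1)$.
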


\begin{proof} 
\begin{enumerate}
\item[i)] Let $u\in V$ be such that $U=\textrm{Span}\big(u,f(u),\ldots,f^{m-1}(u)\big)$ and $f^m(u)=0$. Then 
\[
\sigma\big(f^i(u),f^j(u)\big)=\sigma\big(f^{i+j}(u),u\big)=\sigma\big(f^j(u),f^i(u)\big)=-\sigma\big(f^i(u),f^j(u)\big),
\]
for $i,j\in\{0,1,\ldots,m-1\}$. Since $\textrm{char}(\mathbb{K})\ne 2$, we have $\sigma\big(f^i(u),f^j(u)\big)=0$. Hence $\sigma(u_1,u_2)=0$ for $u_1,u_2\in U$ and thus $U$ is isotropic. 

\item[ii) ]Let $u\in V$ be such that $U=\textrm{Span}\big(u,f(u),\ldots,f^{d-1}(u)\big)$, $f^{d-1}(u)\ne 0$ and $f^d(u)=0$. Let us denote $u_i=f^{d-i}(u)$, $i=1,\ldots,d$. We have $f(u_1)=f^d(u)=0$ and $f^i(u_j)=u_{j-i}$ for $1\le i<j\le d-1$. 

Since $u_1\ne 0$ there exists $v\in V$ such that $\sigma(v,u_1)=1$. We have
\begin{align*}
\sigma\big(f^{i-1}(v),u_i\big)&=\sigma\big(v,f^{i-1}(u_i)\big)=\sigma(v,u_1)=1,
\end{align*}
for $i=1,\ldots,d$ and
\begin{align*}
\sigma\big(f^{j-1}(v),u_i\big)&=\sigma\big(v,f^{j-1}(u_i)\big)=\sigma\big(v,f^{(j-i)}(u_1)\big)=\sigma(v,0)=0,
\end{align*}
for $1\le i< j\le d$.

We define $v^{(1)},\ldots, v^{(d)}$ recursively
\[\left\{\begin{array}{rcl}
v^{(1)} & = & v\\
v^{(k+1)} & = & v^{(k)}-\sigma(v^{(k)},u_k)f^{k}(v^{(k)}), \quad k=1,\ldots,d-1.
\end{array}\right.
\]
We denote $w_1=v^{(d)}$ and $w_i=f^{i-1}(w_1)$ for $i=1,\ldots,d-1$. We will prove that $\{u_1,\ldots,u_d,w_1,\ldots,w_d\}$ is the desired Darboux basis.

Inductively for $k=1,\ldots,d-1$ we obtain
\begin{align*}
\sigma\big(v^{(1)},u_1\big) &= 1
\end{align*}
and
\begin{align*}
\sigma\big(v^{(k+1)},u_1\big) &=\sigma(v^{(k)},u_1)-\sigma(v^{(k)},u_k)\sigma\big(f^{k}(v^{(k)}),u_1\big)\\
 & = \sigma(v^{(k)},u_1)-\sigma(v^{(k)},u_k)\sigma\big(v^{(k)},f^{k}(u_1)\big)\\
 & =\sigma(v^{(k)},u_1)-\sigma(v^{(k)},u_k)\sigma\big(v^{(k)},0\big)\\
 & =1.
\end{align*}
Then, inductively for $i=1,\ldots,d-1$ and $k=i+1,\ldots,d-1$ we obtain
\begin{align*}
\sigma\big(v^{(i+1)},u_{i+1}\big) & =\sigma(v^{(i)},u_{i+1})-\sigma(v^{(i)},u_{i+1})\sigma\big(f^{i}(v^{(i)}),u_{i+1}\big) \\
 & =\sigma(v^{(i)},u_{i+1})-\sigma(v^{(i)},u_{i+1})\sigma\big(v^{(i)},f^{i}(u_{i+1})\big)\\
 & =\sigma(v^{(i)},u_{i+1})-\sigma(v^{(i)},u_{i+1})\sigma\big(v^{(i)},u_{1}\big)\\
 & =0
 \end{align*}
 and
\begin{align*}
\sigma\big(v^{(k+1)},u_{i+1}\big) & =\sigma(v^{(k)},u_{i+1})-\sigma(v^{(k)},u_{i+1})\sigma\big(f^k(v^{(k)}),u_{i+1}\big) \\
 & =0-0\ \sigma\big(v^{(k)},f^{k}(u_{i+1})\big)\\
 & =0.
\end{align*}
Taking $k=d-1$ we have
\begin{align*}
\sigma\big(v^{(d)},u_1\big) & =1
\end{align*}
and
\begin{align*}
\sigma\big(v^{(d)},u_{i+1}\big) & = 0, \quad i=1,\ldots,d-1.
\end{align*}
Since $\sigma\big(v^{(d)},u_1\big)=1$ we obtain
\begin{align}
\sigma\big(f^{i-1}(v^{(d)}),u_{i}\big) & = 1,\quad i=1,\ldots,d\label{eq3}
\end{align}
and
\begin{align}
\sigma\big(f^{j-1}(v^{(d)}),u_{i}\big) & = 0,\quad 1\le i< j\le d.\label{eq4}
\end{align}
Furthermore, for $1\le j< i\le d$ we have
\begin{align}
\sigma\big(f^{j-1}(v^{(d)}),u_{i}\big) &= \sigma\big(v^{(d)},f^{j-1}(u_{i})\big) =  \sigma\big(v^{(d)},u_{(i-j)+1}\big) = 0.\label{eq5}
\end{align}

We define $U=\textrm{Span}\big(u_1,\ldots,u_d\big)$ and $W=\textrm{Span}\big(w_1,\ldots,w_d\big)$. Then i) and equations (\ref{eq3}), (\ref{eq4}) and (\ref{eq5}) imply that $\{u_1,\ldots,u_d,w_1,\ldots,w_d\}$ is a Darboux basis of $U\oplus W$. 
\end{enumerate}
\end{proof}

\section{Dual Jordan Blocks}

\subsection{The nilpotent case}

In the following proposition we prove that if $f$ is a nilpotent self-adjoint operator on $V$, then there exists a Darboux basis such that the matrix representation of $f$ is a two-by-two blocks block-diagonal, where the first block is in Jordan normal form and the second block is the transpose of the first one.

\begin{proposition}\label{nilpotentcase}
Let $\mathbb{K}$ be a field with $\textrm{char}(\mathbb{K})\ne 2$. Let $(V,\sigma)$ be a finite dimensional symplectic $\mathbb{K}$-vector space with $\dim(V)=2n$, $n\in\mathbb{Z}_{>0}$ and let $f:V\rightarrow V$ be a nilpotent self-adjoint operator. Then there exists a Darboux basis $\{u_1,\ldots,u_n,w_1,\ldots,w_n\}$ of $V$ such that the matrix representation of $f$ with respect to this Darboux basis is of the form 
$$ A=\left[\begin{array}{cc} B & O_n\\ O_n & B^T \end{array}\right], $$
for some $B\in M_{n\times n}(\mathbb{K})$ in Jordan normal form.
\end{proposition}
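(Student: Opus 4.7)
The plan is to induct on $n$, with the base case $n=0$ being vacuous. In the inductive step, I would first dispose of the trivial case $f=0$ (in which any Darboux basis works, with $B=O_n$), so that the nilpotency degree $d$ satisfies $1\le d\le n$. Pick $u\in V$ with $f^{d-1}(u)\ne 0$ and let $U=\textrm{Span}(u,f(u),\ldots,f^{d-1}(u))$; this is an $f$-cyclic subspace of dimension $d$. Apply Lemma \ref{nilpotent} ii) to produce a complementary $f$-cyclic subspace $W$ of dimension $d$ and a Darboux basis $\{u_1,\ldots,u_d,w_1,\ldots,w_d\}$ of $U\oplus W$ with $u_i=f^{d-i}(u_d)$ and $w_i=f^{i-1}(w_1)$.

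From the relations $f(u_i)=u_{i-1}$ (with $f(u_1)=0$) and $f(w_i)=w_{i+1}$ (with $f(w_d)=f^d(w_1)=0$, using that $d$ is the nilpotency degree of $f$ on all of $V$), the matrix of $f|_{U\oplus W}$ in this ordered basis is
\[
\left[\begin{array}{cc} B_0 & O_d\\ O_d & B_0^T\end{array}\right],
\]
where $B_0\in M_{d\times d}(\mathbb{K})$ is a single nilpotent Jordan block. This already handles the $U\oplus W$ piece exactly as required.

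To invoke induction, I would set $V'=(U\oplus W)^\sigma$. Since $U\oplus W$ is symplectic, Lemma \ref{lemma0} gives $V=(U\oplus W)\oplus V'$ with $V'$ symplectic. Because $f$ is self-adjoint and $U\oplus W$ is $f$-invariant, for any $v\in V'$ and $x\in U\oplus W$ one has $\sigma(f(v),x)=\sigma(v,f(x))=0$, so $V'$ is $f$-invariant. The restriction $f|_{V'}$ is clearly nilpotent and self-adjoint with respect to $\sigma|_{V'}$, and $\dim V'=2(n-d)<2n$, so by induction there exists a Darboux basis $\{u'_1,\ldots,u'_{n-d},w'_1,\ldots,w'_{n-d}\}$ of $V'$ in which $f|_{V'}$ has matrix $\left[\begin{smallmatrix} B' & O\\ O & (B')^T\end{smallmatrix}\right]$ with $B'$ in Jordan normal form.

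Finally, I would concatenate and reorder the two bases as
\[
\{u_1,\ldots,u_d,u'_1,\ldots,u'_{n-d},\,w_1,\ldots,w_d,w'_1,\ldots,w'_{n-d}\}.
\]
Since $U\oplus W$ and $V'$ are $\sigma$-orthogonal, the Darboux relations across the two blocks hold automatically, so this is a Darboux basis of $V$. In this ordering the matrix of $f$ becomes $\left[\begin{smallmatrix} B & O_n\\ O_n & B^T\end{smallmatrix}\right]$ with $B=\left[\begin{smallmatrix} B_0 & O\\ O & B'\end{smallmatrix}\right]$, and $B$ is in Jordan normal form as a direct sum of Jordan blocks. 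The only nontrivial verification is the $f$-invariance of $V'$ and the compatibility of the reordering with the Darboux condition; the key construction, namely producing the symplectic direct summand with the correct cyclic structure, has been done in Lemma \ref{nilpotent}, so no further computation beyond this bookkeeping is needed.
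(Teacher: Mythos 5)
Your proposal is correct and follows essentially the same approach as the paper: induct on $n$, use Lemma \ref{nilpotent} ii) to split off a symplectic $f$-cyclic pair $U\oplus W$ of dimension $2d$, observe that $(U\oplus W)^\sigma$ is $f$-invariant (via self-adjointness) and symplectic (via Lemma \ref{lemma0}), apply induction, and concatenate the Darboux bases. You are slightly more explicit than the paper in two useful places: you spell out the relations $f(u_i)=u_{i-1}$, $f(w_i)=w_{i+1}$, $f(w_d)=f^d(w_1)=0$ (the last using that $d$ is the global nilpotency degree) to exhibit the block $\left[\begin{smallmatrix} B_0 & O\\ O & B_0^T\end{smallmatrix}\right]$ on $U\oplus W$, and you dispose of the $f=0$ case up front rather than folding it into the base case. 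These are only expository differences; the argument is the same.
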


\begin{proof}
We will use induction on $n$. Let $d$ be the degree of nilpontency of $f$. Lemma \ref{nilpotent} implies that $2d\le 2n$.

If $n=1$, then $d=1$ and therefore $f=0$. The proposition follows for any Darboux basis and $B=O_n$. 

Let us assume $n>1$. Let $U\subseteq V$ be an $f$-cyclic subspace  with $\dim(U)=d$. Lemma \ref{nilpotentcase} implies that there exists an $f$-cyclic subspace $W\subseteq V$ with $\dim(W)=d$ such that $U\oplus W$ is a symplectic subspace of $V$ and that there exists a Darboux basis $\{u_1,\ldots,u_d,w_1,\ldots,w_d\}$ of $U\oplus W$ where $\{u_1,\ldots,u_d\}$ is a basis of $U$ and $\{w_1,\ldots,w_d\}$ is a basis of $W$ such that
\[
u_i=f^{d-i}(u_d) \textrm{ and } w_i=f^{i-1}(w_1),
\]
for $i=1,\ldots,d$.

We denote $V_0=U\oplus W\subseteq V$. We will prove that $V_0^\sigma$ is $f$-invariant. Let us assume that this is not the case. Then there exist $v\in V_0^\sigma$ such that $f(v)\not\in V_0^\sigma$. Furthermore, there exist $v_0\in V_0$ such that $\sigma(v_0,f(v))\ne 0$. Since $f$ is self-adjoint, we have $\sigma(f(v_0),v)\ne 0$. Hence $f(v_0)\ne 0$. Since $V_0$ is $f$-invariant, we have $f(v_0)\in V_0$ and $\sigma(f(v_0),v)=0$. We obtain a contradiction. 

We have $\dim(V_0^\sigma)=2(n-d)$ and by lemma \ref{lemma0}, $V_0^\sigma$ is a symplectic space. 

We will apply the induction hypothesis to the restriction $f|_{V_0^\sigma}$ to obtain a Darboux basis $\{u_{d+1},\ldots,u_n,w_{d+1},\dots,w_n\}$ of $V_0^\sigma$ such that the matrix representation of $f|_{V_0^\sigma}$ with respect to this Darboux basis is of the form 
$$ A_0=\left[\begin{array}{cc} B_0 & O_{n-d}\\ O_{n-d} & B_0^T \end{array}\right], $$
for some $B_0\in M_{n\times n}(\mathbb{K})$ in Jordan normal form.

Taking the Darboux basis $\{u_1,\ldots,u_n,w_1,\dots,w_n\}$, the proposition follows with $B$ being a block diagonal matrix where the first block is a Jordan block and the second one is $B_0$.
\end{proof}

\subsection{Splitting characteristic polynomial}

In the following proposition we prove that if $f$ is a self-adjoint operator on $V$ with all its eigenvalues in the base field, then there exists a Darboux basis such that the matrix representation of $f$ is a two-by-two blocks block-diagonal, where the first block is in Jordan normal form and the second block is the transpose of the first one.

\begin{proposition}\label{splittingcase}
Let $\mathbb{K}$ be a field with $\textrm{char}(\mathbb{K})\ne 2$. Let $(V,\sigma)$ be a finite dimensional symplectic $\mathbb{K}$-vector space with $\dim(V)=2n$, $n\in\mathbb{Z}_{>0}$ and let $f:V\rightarrow V$ be a self-adjoint operator on $V$ with all its eigenvalues in $\mathbb{K}$. Then there exists a Darboux basis $\{u_1,\ldots,u_n,w_1,\ldots,w_n\}$ of $V$ such that the matrix representation of $f$ with respect to this Darboux basis is of the form 
$$ A=\left[\begin{array}{cc} B & O_n\\ O_n & B^T \end{array}\right], $$
for some $B\in M_{n\times n}(\mathbb{K})$ in Jordan normal form.
\end{proposition}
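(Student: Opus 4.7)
The plan is to reduce the statement to the nilpotent case already settled in Proposition \ref{nilpotentcase} by combining it with the symplectic primary decomposition of Lemma \ref{sympdecom}(iii). Since all eigenvalues of $f$ lie in $\mathbb{K}$, the characteristic polynomial factors as $P_f(t)=\prod_{i=1}^r (t-\lambda_i)^{m_i}$ with distinct $\lambda_i\in\mathbb{K}$, so the irreducible factors appearing in Lemma \ref{sympdecom}(iii) are the linear polynomials $P_i(t)=t-\lambda_i$.

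First I would invoke Lemma \ref{sympdecom}(iii) to obtain an $f$-invariant symplectic direct sum decomposition
\[
V=V_1\oplus\cdots\oplus V_r,\qquad V_i=\ker\bigl((f-\lambda_i\mathbbm{1}_V)^{m_i}\bigr),
\]
and then consider on each $V_i$ the operator $g_i=(f-\lambda_i\mathbbm{1}_V)|_{V_i}$. By construction $g_i$ is nilpotent on $V_i$, and it is self-adjoint there because $\lambda_i\mathbbm{1}_V$ is trivially self-adjoint (it commutes through $\sigma$), so $f-\lambda_i\mathbbm{1}_V$ is self-adjoint on $V$ by Lemma \ref{sympdecom}(ii) (or directly), and self-adjointness is inherited by the restriction to the symplectic subspace $V_i$.

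Next I would apply Proposition \ref{nilpotentcase} to each nilpotent self-adjoint $g_i$ on the symplectic space $(V_i,\sigma|_{V_i})$ to obtain a Darboux basis
\[
\{u_1^{(i)},\ldots,u_{n_i}^{(i)},w_1^{(i)},\ldots,w_{n_i}^{(i)}\}
\]
of $V_i$ (where $2n_i=\dim V_i$) in which $g_i$ is represented by a block matrix $\left[\begin{smallmatrix} N_i & O \\ O & N_i^T\end{smallmatrix}\right]$ with $N_i$ in Jordan normal form. Adding back $\lambda_i\mathbbm{1}_{V_i}$ replaces $N_i$ by $B_i:=N_i+\lambda_i I_{n_i}$, which is still in Jordan normal form (same blocks, eigenvalue $\lambda_i$), and the representation of $f|_{V_i}$ in this basis is $\left[\begin{smallmatrix} B_i & O \\ O & B_i^T\end{smallmatrix}\right]$, since $\lambda_i I_{n_i}=(\lambda_i I_{n_i})^T$.

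Finally I would concatenate these bases, placing all the $u$-vectors first and all the $w$-vectors second:
\[
\{u_1^{(1)},\ldots,u_{n_1}^{(1)},\ldots,u_1^{(r)},\ldots,u_{n_r}^{(r)},w_1^{(1)},\ldots,w_{n_1}^{(1)},\ldots,w_1^{(r)},\ldots,w_{n_r}^{(r)}\}.
\]
Because the $V_i$ are pairwise $\sigma$-orthogonal (each $V_i$ is symplectic and $V=V_1\oplus\cdots\oplus V_r$ forces $V_j\subseteq V_i^\sigma$ for $j\ne i$), this is a Darboux basis of $V$ with $n=n_1+\cdots+n_r$. In this basis $f$ is represented by $\left[\begin{smallmatrix} B & O_n \\ O_n & B^T\end{smallmatrix}\right]$ with $B=\mathrm{diag}(B_1,\ldots,B_r)$ in Jordan normal form. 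The only substantive point beyond routine bookkeeping is the pairwise $\sigma$-orthogonality of the primary components $V_i$, which I would justify by noting that the projectors $p_i$ from Lemma \ref{sympdecom}(iii) are self-adjoint and satisfy $p_i\circ p_j=0$ for $i\ne j$, giving $\sigma(V_i,V_j)=\sigma(p_i V,p_j V)=\sigma(V,p_i p_j V)=0$.
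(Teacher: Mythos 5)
Your proposal is correct and follows essentially the same route as the paper: reduce via the symplectic primary decomposition of Lemma \ref{sympdecom}(iii), make each primary component nilpotent by subtracting $\lambda_i\mathbbm{1}$, apply Proposition \ref{nilpotentcase}, and concatenate the Darboux bases. The only stylistic difference is that the paper runs an induction on $r$ (peeling off one eigenvalue component at a time), whereas you handle all components simultaneously and make explicit the pairwise $\sigma$-orthogonality of the $V_i$ via the self-adjoint projectors $p_i$ with $p_i\circ p_j=0$ --- a point the paper leaves implicit inside the proof of Lemma \ref{sympdecom}(iii) (where $\ker p_i=p_i(V)^\sigma$ is shown); spelling it out as you did is a genuine improvement in clarity but not a different argument.
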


\begin{proof}
Let $$P_f(t)=\prod_{i=1}^r(t-\lambda_i)^{m_i}$$ be the characteristic polynomial of $f$, where $\lambda_1,\ldots,\lambda_r\in\mathbb{K}$ with $\lambda_i\ne\lambda_j$ whenever $i\ne j$ and $m_1,\ldots,m_r\in\mathbb{Z}_{>0}$ with $m_1+\ldots+m_r=n$.

We will use induction on $r$. If $r=1$ then $$P_f(t)=(t-\lambda)^{n},$$
for some $\lambda\in\mathbb{K}$. The Caley-Hamilton theorem implies that $V=\ker\big( (f-\lambda\mathbbm{1}_V)^n\big)$, where $\mathbbm{1}_V$ is the identity on $V$. Thus $g=f-\lambda\mathbbm{1}_V$ is nilpotent and by Lemma \ref{sympdecom} ii) is self-adjoint. Proposition \ref{nilpotentcase} implies that there exists a Darboux basis $\{u_1,\ldots,u_n,w_1,\ldots,w_n\}$ of $V$ such that the matrix representation of $g$ with respect to this Darboux basis is of the form 
$$ A_0=\left[\begin{array}{cc} B_0 & O_n\\ O_n & B_0^T \end{array}\right], $$
for some $B_0\in M_{n\times n}(\mathbb{K})$ in Jordan normal form. The case $r=1$ follows by noting that if $A$ is the matrix representation of $f$ with respect to the Darboux basis $\{u_1,\ldots,u_n,w_1,\ldots,w_n\}$, then
$$ A=\left[\begin{array}{cc} B & O_n\\ O_n & B^T \end{array}\right], $$
where $B=B_0+\lambda I_n$ is in Jordan normal form.

Let $r>1$. Lemma \ref{sympdecom} iii) implies that $$V=V_1\oplus\ldots\oplus V_r,$$ where $V_i=\ker\big( (f-\lambda_i\mathbbm{1}_V)^m_i\big)$ is an $f$-invariant symplectic subspace for $i=1,\ldots,r$. Let us denote $V_0=V_2\oplus\ldots\oplus V_d$. The characteristic polynomial of the restriction $f_0=f|_{V_0}$ is $P_{f_0}(t)=\prod_{i=2}^r(t-\lambda_i)^{m_i}$. We apply the induction hypothesis to the restriction $f_0$ to obtain a Darboux basis $\{u_{m_1+1},\ldots,u_n,w_{m_1+1},\dots,w_n\}$ of $V_0$ such that the matrix representation of $f_0$ with respect to this Darboux basis is of the form 
$$ A_0=\left[\begin{array}{cc} B_0 & O_{n-{m_1}}\\ O_{n-{m_1}} & B_0^T \end{array}\right], $$
for some $B_0\in M_{(n-{m_1})\times (n-{m_1})}(\mathbb{K})$ in Jordan normal form.

We apply the case $r=1$ to the restriction $f_1=f|_{V_1}$ and we obtain a Darboux basis $\{u_1,\ldots,u_{m_1},w_1,\ldots,w_{m_1}\}$ of $V_1$ such that the matrix representation of $f_1$ with respect to this Darboux basis is of the form 
$$A_1=\left[\begin{array}{cc} B_1 & O_n\\ O_n & B_1^T \end{array}\right], $$
for some $B_1\in M_{m_1\times m_1}(\mathbb{K})$ in Jordan normal form. 

Taking the Darboux basis $\{u_1,\ldots,u_n,w_1,\dots,w_n\}$ the proposition follows with $B$ being the block diagonal matrix where the first block is $B_1$ and the second one is $B_0$.
\end{proof}

\subsection{Galois descent}

In this section we consider the case when the base field is perfect. We prove that if $f$ is a self-adjoint operator on $V$, then there exists a Darboux basis such that the matrix representation of $f$ is a two-by-two blocks block-diagonal, where the second block is the transpose of the first one.

\begin{proposition}\label{galoiscase}
Let $\mathbb{K}$ be a perfect field with $\textrm{char}(\mathbb{K})\ne 2$. Let $(V,\sigma)$ be a finite dimensional symplectic $\mathbb{K}$-vector space with $\dim(V)=2n$, $n\in\mathbb{Z}_{>0}$ and let $f:V\rightarrow V$ be a self-adjoint operator on $V$. Then there exists a Darboux basis $\{u_1,\ldots,u_n,w_1,\ldots,w_n\}$ of $V$ such that the matrix representation of $f$ with respect to this Darboux basis is of the form 
$$ A=\left[\begin{array}{cc} B & O_n\\ O_n & B^T \end{array}\right], $$
for some $B\in M_{n\times n}(\mathbb{K})$.
\end{proposition}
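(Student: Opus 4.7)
The plan is to reduce the general case to the splitting-field case (Proposition \ref{splittingcase}) by Galois descent. First, I would apply Lemma \ref{sympdecom}(iii) to decompose $V$ into $f$-invariant symplectic subspaces corresponding to the irreducible factors of the characteristic polynomial of $f$, reducing to the case $P_f(t)=P(t)^m$ for a single irreducible $P\in\mathbb{K}[t]$. If $\deg P=1$, Proposition \ref{splittingcase} finishes the job, so assume $d=\deg P\ge 2$. Since $\mathbb{K}$ is perfect, the splitting field $\mathbb{L}$ of $P$ over $\mathbb{K}$ is a finite Galois extension; set $G=\textrm{Gal}(\mathbb{L}/\mathbb{K})$.

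Next, extend scalars to obtain $V_\mathbb{L}=V\otimes_\mathbb{K}\mathbb{L}$ with induced symplectic form $\sigma_\mathbb{L}$ and self-adjoint operator $f_\mathbb{L}$, and decompose $V_\mathbb{L}=\bigoplus_\lambda V_\mathbb{L}^\lambda$ into generalized eigenspaces of $f_\mathbb{L}$, with $\lambda$ ranging over the roots of $P$. Self-adjointness gives $\sigma_\mathbb{L}((f_\mathbb{L}-\mu)^m v,w)=\sigma_\mathbb{L}(v,(f_\mathbb{L}-\mu)^m w)$, and since $(f_\mathbb{L}-\mu)^m$ is invertible on $V_\mathbb{L}^\lambda$ whenever $\lambda\ne\mu$, this forces $\sigma_\mathbb{L}(V_\mathbb{L}^\lambda,V_\mathbb{L}^\mu)=0$ for $\lambda\ne\mu$. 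Therefore $\sigma_\mathbb{L}$ restricts to a non-degenerate alternating form on each $V_\mathbb{L}^\lambda$; in particular $\dim V_\mathbb{L}^\lambda=m$ is even.

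Now I would descend one eigenspace at a time. Fix a root $\alpha$ of $P$, let $G_\alpha=\textrm{Stab}_G(\alpha)$, and set $E=\mathbb{L}^{G_\alpha}=\mathbb{K}(\alpha)$. The $G_\alpha$-fixed subspace $V'=(V_\mathbb{L}^\alpha)^{G_\alpha}$ is an $E$-vector space satisfying $V'\otimes_E\mathbb{L}=V_\mathbb{L}^\alpha$. Since $\sigma_\mathbb{L}$ is $G$-equivariant, its restriction to $V'$ takes values in $E$, is alternating, and remains non-degenerate by Galois descent, so $(V',\sigma')$ is a symplectic $E$-vector space. The operator $f'=f_\mathbb{L}|_{V'}$ is $E$-linear, self-adjoint, and has $\alpha\in E$ as its unique eigenvalue. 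Applying Proposition \ref{splittingcase} over $E$ yields a decomposition $V'=U'\oplus W'$ into two $f'$-invariant $E$-lagrangians.

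Finally, reassemble and descend. Set $U_\mathbb{L}^\alpha=U'\otimes_E\mathbb{L}$ and, for each root $\beta=\tau\alpha$ of $P$ (with $\tau\in G$), define $U_\mathbb{L}^\beta=\tau(U_\mathbb{L}^\alpha)$; this is well-defined because $U_\mathbb{L}^\alpha$ is $G_\alpha$-invariant. Proceed analogously with $W'$. The sums $U_\mathbb{L}=\bigoplus_\beta U_\mathbb{L}^\beta$ and $W_\mathbb{L}=\bigoplus_\beta W_\mathbb{L}^\beta$ are $G$-invariant, $f_\mathbb{L}$-invariant, complementary $\mathbb{L}$-lagrangians of $V_\mathbb{L}$: isotropy in each summand combined with pairwise $\sigma_\mathbb{L}$-orthogonality between summands gives isotropy, and dimensions total $n$. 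By Galois descent, $U_\mathbb{L}$ and $W_\mathbb{L}$ descend to complementary $f$-invariant $\mathbb{K}$-lagrangians $U,W\subset V$. Choosing any $\mathbb{K}$-basis $\{u_1,\ldots,u_n\}$ of $U$ together with its $\sigma$-dual basis $\{w_1,\ldots,w_n\}$ in $W$ produces a Darboux basis in which the off-diagonal blocks vanish by $f$-invariance of $U$ and $W$, while self-adjointness of $f$ combined with duality of the two bases forces the lower-right block to equal the transpose of the upper-left. The main difficulty is the Galois descent step: verifying that the restricted form on $V'$ is non-degenerate over $E$ and that the Galois-translated pieces $U_\mathbb{L}^\beta$ assemble into a single $G$-invariant lagrangian rather than just an isotropic subspace.
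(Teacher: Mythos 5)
Your proposal is correct, and it follows the same Galois-descent strategy as the paper: reduce via Lemma~\ref{sympdecom}(iii) to $P_f = P^m$ with $P$ irreducible, extend scalars to the splitting field, decompose into generalized eigenspaces, and descend a pair of complementary $f$-invariant lagrangians back to $\mathbb{K}$. The difference — and it is a substantive one — is \emph{where} you first choose the lagrangian splitting. The paper applies Proposition~\ref{splittingcase} to $V_{f,1}\subset V_f$ directly over the splitting field $\mathbb{K}_f$, obtaining a lagrangian $U_{f,1}$, and then translates by coset representatives $\tau_i$ of $\mathrm{Stab}_G(\lambda_1)$ to build $U_f = \bigoplus_i U_{f,i}$; the asserted $G$-stability of $U_f$ then requires $U_{f,1}$ to be stable under $\mathrm{Stab}_G(\lambda_1)$, which Proposition~\ref{splittingcase} does not guarantee when $\mathbb{K}(\lambda_1)\subsetneq\mathbb{K}_f$ (so that $\mathrm{Stab}_G(\lambda_1)\neq\{1\}$). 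You instead first descend the single eigenspace $V_\mathbb{L}^\alpha$ to the intermediate field $E=\mathbb{K}(\alpha)=\mathbb{L}^{G_\alpha}$ — after checking, correctly, that the restricted form is $E$-valued, alternating and non-degenerate — apply Proposition~\ref{splittingcase} over $E$ (legitimately, since $\alpha\in E$ is the only eigenvalue), and re-ascend. This makes $U_\mathbb{L}^\alpha = U'\otimes_E\mathbb{L}$ automatically $G_\alpha$-stable, so the translated pieces $U_\mathbb{L}^\beta$ are well-defined and glue into a genuinely $G$-stable lagrangian. The cost is one extra layer of descent; the payoff is that the $G$-invariance the paper merely asserts becomes a tautology in your construction. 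Your remaining steps — pairwise $\sigma_\mathbb{L}$-orthogonality of the eigenspaces, evenness of $m$, the dimension count giving a lagrangian rather than a mere isotropic subspace, and the final passage from complementary $f$-invariant lagrangians to a Darboux basis producing the block form $\bigl[\begin{smallmatrix} B & 0\\ 0 & B^T\end{smallmatrix}\bigr]$ — are all sound.
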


\begin{proof}
Let
\[
P_f(t)=\prod_{i=1}^r P_i(t)^{m_i},\quad m_1,\ldots,m_r\in\mathbb{Z}_>0
\]
be the characteristic polynomial of $f$, with $P_1(t),\ldots,P_r(t)\in\mathbb{K}[t]$ monic irreducible, pairwise coprime factors of $P_f(t)$. Lemma \ref{sympdecom} iii) implies that $$V=V_1\oplus\ldots\oplus V_r,$$ where $V_i=\ker\big(P_i(f)^{m_i}\big)$ is an $f$-invariant symplectic subspace for $i=1,\ldots,r$.

Following an argument similar to the proof of Proposition \ref{splittingcase} it suffices to proof the proposition in the case $r=1$. Thus we will assume that the characteristic polynomial of $f$, $P_f(t)\in \mathbb{K}[t]$, is of the form
\[
P_f(t)=P(t)^m,
\]
where $m\in\mathbb{Z}_{>0}$ and $P(t)\in\mathbb{K}[t]$ is an irreducible polynomial.

Let $\mathbb{K}_f$ be the splitting field of $P(t)$. Since $\mathbb{K}$ is perfect then $P(t)=\prod_{i=1}^d(t-\lambda_i)$ for some distinct $\lambda_1,\ldots,\lambda_d\in\mathbb{K}_f$. In particular
\[
\mathbb{K}_f=\mathbb{K}[\lambda_1,\ldots,\lambda_d].
\]
We denote $V_f=\mathbb{K}_f\otimes V$ and we define the $\mathbb{K}_f$-linear transformation $F:V_f\rightarrow V_f$ by
\[
F(c\otimes v)=c\otimes f(v),\quad c\in\mathbb{K}_f, v\in V.
\]
Its characteristic polynomial is $P_F(t)=(t-\lambda_1)^m\ldots(t-\lambda_d)^m$.

We define the $\mathbb{K}_f$-bilinear form $\sigma_f:V_f\times V_f\rightarrow\mathbb{K}_f$ by
\[
\sigma_f(c\otimes v,d\otimes w)=cd\ \sigma(v,w).
\]
Then $(V_f,\sigma_f)$ is a symplectic $\mathbb{K}_f$-vector space and $F$ is self-adjoint. Lemma \ref{sympdecom} iii) implies
\[
V_f=V_{f,1}\oplus\ldots\oplus V_{f,d},
\]
where $V_{f,i}=\ker\big((t-\lambda_i)^m\big)$ is an $F$-invariant symplectic subspace for $i=1,\ldots,d$.

Let $G$ be the Galois group of the field extension $\mathbb{K}_f/\mathbb{K}$. For each $\tau\in G$ we define the $\mathbb{K}$-linear isomorphism $\Psi_\tau:V_f\rightarrow V_f$ by
\[
\Psi_\tau(c\otimes v)=\tau(c)\otimes v,\quad c\in\mathbb{K}_f,v\in V.
\]
Note that $\omega_f\circ (\Psi_\tau\times\Psi_\tau)=\tau\circ\omega_f$ and $F\circ\Psi_\tau=\Psi_\tau\circ F$. Thus $\Psi_\tau(V_{f,i})=V_{f,\tau(i)}$, where $\lambda_{\tau(i)}=\tau(\lambda_i)$, $i=1,\ldots,d$.

We apply Proposition \ref{splittingcase} to the restriction $F_1=F|_{V_{f,1}}$ to obtain a Darboux basis $\{u^{(1)}_{f,1},\ldots,u^{(1)}_{f,m},w^{(1)}_{f,1},\ldots,w^{(1)}_{f,m}\}$ of $V_{f,1}$ such that the matrix representation of $F_1$ with respect to this Darboux basis is of the form 
$$ A_{f,1}=\left[\begin{array}{cc} B_{f,1} & O_n\\ O_n & B_{f,1}^T \end{array}\right], $$
for some $B_{f,1}\in M_{m\times m}(\mathbb{K}_f)$ in Jordan normal form.

For $i=2,\ldots,d$ let $\tau_i\in G$ be such that $\tau_i(\lambda_1)=\lambda_i$. We define $$u^{(i)}_{f,j}=\Psi_{\tau_i}(u^{(1)}_{f,j})\textrm{ and } w^{(i)}_{f,j}=\Psi_{\tau_i}(w^{(1)}_{f,j}),$$
for $j=1,\ldots,d$. Hence we obtain Darboux bases $\{u^{(i)}_{f,1},\ldots,u^{(i)}_{f,m},w^{(i)}_{f,1},\ldots,w^{(i)}_{f,m}\}$ of $V_{f,i}$ for $i=1\ldots,d$ such that the matrix representations of the restrictions $F_i=F|_{V_{f,i}}$ with respect to each of this Darboux bases are of the form 
$$ A_{f,i}=\left[\begin{array}{cc} B_{f,i} & O_n\\ O_n & B_{f,i}^T \end{array}\right], $$
for some $B_{f,i}\in M_{n\times n}(\mathbb{K}_f)$ in Jordan normal form.

We denote $U_{f,i}=\textrm{Span}\big(u^{(i)}_{f,1},\ldots,u^{(i)}_{f,m}\big)$ and $W_{f,i}=\textrm{Span}\big(w^{(i)}_{f,1},\ldots,w^{(i)}_{f,m}\big)$ for $i=1\ldots,d$. Furthermore we denote $U_{f}=U_{f,1}\oplus\ldots\oplus U_{f,d}$ and $W_{f}=W_{f,1}\oplus\ldots\oplus w_{f,d}$.
We have that $U_f$ and $W_f$ are $F$-invariant lagrangian subspaces such that for every $\tau\in G$, $\Psi_\tau (U_f)\subseteq U_f$ and $\Psi_\tau (W_f)\subseteq W_f$.  Therefore, if we apply Galois descent to the $G$-structure on $V_f$, we obtain two complementary $f$-invariant  lagrangian subspace $U$ and $W$ of $V$ and a Darboux basis $\{u_1,\ldots,u_n,w_1,\ldots,w_n\}$ of $V$ such that the matrix representation of $f$ with respect to this Darboux basis is of the form 
$$ A=\left[\begin{array}{cc} B & O_n\\ O_n & B^T \end{array}\right], $$
for some $B\in M_{n\times n}(\mathbb{K})$.
\end{proof}

\subsection{Symplectic spectral theorem}

Let $\{u_1,\ldots,u_n,w_1,\ldots,w_n\}$ be a Darboux basis of $V$. Let $\Omega\in M_{2n\times 2n}(\mathbb{K})$ be the $2\times 2$-blocks matrix
$$ \Omega=\left[\begin{array}{cc} O_n & I_n\\ -I_n & O_n \end{array}\right], $$
where $I_n\in M_{n\times n}(\mathbb{K})$ is the identity matrix and $O_n\in M_{n\times n}(\mathbb{K})$ is the zero matrix. Let $v,w\in V$ and let $\mathbf{x},\mathbf{y}\in\mathbb{K}^{2n}$ be the coordinate vectors of $v$ and $w$, respectively in the chosen Darboux basis. We have $$\sigma(v,w)=\mathbf{x}^T \Omega \mathbf{y}.$$ Let $A\in M_{2n\times 2n}(\mathbb{K})$ be the matrix representation of $f$ in the chosen Darboux basis. Then $f$ is self-adjoint if and only if $A^T\Omega=\Omega A$. Let $\{v_1,\ldots,v_{2n}\}$ be another basis of $V$ and let $C\in M_{2n\times 2n}(\mathbb{K})$ be the matrix of change of coordinates from this basis to the chosen Darboux basis. Then $\{v_1,\ldots,v_{2n}\}$ is a Darboux basis if and only if $C^T\Omega C=\Omega$. In this case, we say that $C$ is a symplectic matrix. We have the following theorem.

\begin{theorem}\label{matrixtheorem}
Let $\mathbb{K}$ be a field with $\textrm{char}(\mathbb{K})\ne 2$. Let $n\in\mathbb{Z}_{>0}$ and let $\Omega\in M_{2n\times 2n}(\mathbb{K})$ be the $2\times 2$-blocks matrix
$$ \Omega=\left[\begin{array}{cc} O_n & I_n\\ -I_n & O_n \end{array}\right], $$
where $I_n\in M_{n\times n}(\mathbb{K})$ is the identity matrix and $O_n\in M_{n\times n}(\mathbb{K})$ is the zero matrix. If $A\in M_{2n\times 2n}(\mathbb{K})$ is such that $$A^T=\Omega A\Omega^{-1},$$
then
\begin{itemize}
\item[i)] If all the eigenvalues of $A$ are in $\mathbb{K}$, then there exists a symplectic matrix $C$ such that
$$ C^{-1}AC=\left[\begin{array}{cc} B & O_n\\ O_n & B^T \end{array}\right], $$
for some $B\in M_{n\times n}(\mathbb{K})$ in Jordan normal form.
\item[ii)] If $\mathbb{K}$ is perfect, then there exists a symplectic matrix $C$ such that
$$ C^{-1}AC=\left[\begin{array}{cc} B & O_n\\ O_n & B^T \end{array}\right], $$
for some $B\in M_{n\times n}(\mathbb{K})$.
\end{itemize} 
\end{theorem}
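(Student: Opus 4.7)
The approach is essentially translation: the matrix statement is equivalent to applying Proposition \ref{splittingcase} (for part i) and Proposition \ref{galoiscase} (for part ii) to a suitable ambient symplectic vector space, and then converting the resulting change of Darboux basis back into a symplectic change of coordinates, all via the dictionary developed in the paragraph immediately preceding the theorem.

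Concretely, first I would take $V = \mathbb{K}^{2n}$ equipped with the symplectic form $\sigma(\mathbf{x},\mathbf{y}) = \mathbf{x}^T \Omega \mathbf{y}$; a direct computation on the entries $e_i^T \Omega e_j$ shows that the standard basis $\{e_1, \ldots, e_{2n}\}$ is a Darboux basis for $(V, \sigma)$. I would then let $f: V \to V$ be the linear map whose matrix in the standard basis is $A$. The hypothesis $A^T = \Omega A \Omega^{-1}$ rewrites as $A^T \Omega = \Omega A$, which is precisely the matrix form of the self-adjointness condition $\sigma(v, f(w)) = \sigma(f(v), w)$ for all $v,w \in V$.

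For part (i), the eigenvalues of $f$ coincide with those of $A$ and so lie in $\mathbb{K}$, so Proposition \ref{splittingcase} yields a Darboux basis $\{u_1, \ldots, u_n, w_1, \ldots, w_n\}$ of $V$ in which the matrix of $f$ has the desired block form with $B$ in Jordan normal form. I would let $C$ be the matrix whose $j$-th column is the coordinate vector of the $j$-th element of this new basis relative to the standard basis. Then $C^{-1} A C$ is the matrix of $f$ in the new basis and so has the prescribed form. The Darboux condition on $\{u_1, \ldots, u_n, w_1, \ldots, w_n\}$ translates, via identities such as $\sigma(u_i, u_j) = e_i^T (C^T \Omega C) e_j$ and the analogous ones pairing the $w_i$'s with the $u_j$'s, into $C^T \Omega C = \Omega$, i.e. into the symplecticness of $C$. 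Part (ii) proceeds identically, invoking Proposition \ref{galoiscase} in place of Proposition \ref{splittingcase}.

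There is no substantive obstacle: all the structural work has been done in the preceding sections, and the theorem is simply its matrix reformulation. The only point requiring a moment of care is the verification that a change of basis matrix between two Darboux bases is symplectic, which is a short direct calculation from the definitions.
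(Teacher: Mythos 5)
Your proposal is correct and matches the paper's proof, which simply cites Propositions \ref{splittingcase} and \ref{galoiscase}; the translation between matrix and operator language that you spell out is exactly the dictionary set up in the paragraph preceding the theorem.
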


\begin{proof}
Follows directly from Proposition \ref{splittingcase} and \ref{galoiscase}.
\end{proof}

\begin{corollary}\label{thetheorem}
Let $\mathbb{K}$ be a perfect field with $\textrm{char}(\mathbb{K})\ne 2$. Let $(V,\sigma)$ be a finite dimensional symplectic $\mathbb{K}$-vector space with $\dim(V)=2n$, $n\in\mathbb{Z}_{>0}$. Let $f:V\rightarrow V$ be a self-adjoint operator on $V$. Then there exists a Darboux basis $\{u_1,\ldots,u_n,w_1,\ldots,w_n\}$ of $V$ and a $\mathbb{K}$-linear transformation $l:U\rightarrow U$, where $U=\textrm{Span}(u_1,\ldots,u_n)$, such that (see Figure \ref{symspectral})
\[
\Phi\circ(l,l^*)=f\circ\Phi
\]
and $\Phi$ is the isomorphism defined in Lemma \ref{remarkcanexample}.
\end{corollary}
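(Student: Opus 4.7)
The plan is to translate the matrix statement of Proposition \ref{galoiscase} directly into the coordinate-free language of Lemma \ref{remarkcanexample}. The heavy lifting has already been done: Proposition \ref{galoiscase} produces the Darboux basis with respect to which $f$ is block-diagonal with second block equal to the transpose of the first. All that remains is to package this information through the isomorphism $\Phi$.

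More concretely, I would first invoke Proposition \ref{galoiscase} to obtain a Darboux basis $\{u_1,\ldots,u_n,w_1,\ldots,w_n\}$ of $V$ such that the matrix of $f$ in this basis has the block form
\[
A=\left[\begin{array}{cc} B & O_n\\ O_n & B^T \end{array}\right],
\]
for some $B\in M_{n\times n}(\mathbb{K})$. Setting $U=\textrm{Span}(u_1,\ldots,u_n)$, I would then define $l:U\to U$ to be the unique $\mathbb{K}$-linear map whose matrix in the basis $\{u_1,\ldots,u_n\}$ is $B$, so that $l(u_j)=\sum_i B_{ij}\,u_i$. From the block structure of $A$ one also reads off $f(w_j)=\sum_i B_{ji}\,w_i$.

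Next I would compute $l^*$ in the dual basis $\{\lambda_1,\ldots,\lambda_n\}$ from Lemma \ref{remarkcanexample}. A direct evaluation gives $l^*(\lambda_j)(u_k)=\lambda_j(l(u_k))=B_{jk}$, so that $l^*(\lambda_j)=\sum_i B_{ji}\,\lambda_i$, i.e. $l^*$ has matrix $B^T$ in the dual basis, exactly matching the lower block of $A$.

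Finally, I would verify the required identity $\Phi\circ(l,l^*)=f\circ\Phi$ on the generators. On $(u_j,0)$ one has $\Phi((l,l^*)(u_j,0))=\Phi(l(u_j),0)=\sum_i B_{ij}\,u_i=f(u_j)=f(\Phi(u_j,0))$; on $(0,\lambda_j)$ one has $\Phi((l,l^*)(0,\lambda_j))=\Phi(0,l^*(\lambda_j))=\sum_i B_{ji}\,w_i=f(w_j)=f(\Phi(0,\lambda_j))$. Since the generators span $U\times U^*$ and both compositions are $\mathbb{K}$-linear, the identity holds on all of $U\times U^*$. There is no real obstacle here: the only nontrivial ingredient is Proposition \ref{galoiscase}, and the corollary is essentially the restatement of that proposition in coordinate-free form, with the verification amounting to matching matrix entries on both sides of $\Phi$.
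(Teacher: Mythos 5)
Your proof is correct and is essentially the intended argument: the paper states Corollary \ref{thetheorem} without a separate proof precisely because it is the coordinate-free repackaging of Proposition \ref{galoiscase} through the isomorphism $\Phi$, which is exactly what you carried out. Your identification of $l$ via the matrix $B$, the computation showing $l^*$ has matrix $B^T$ in the dual basis $\{\lambda_i\}$, and the verification of $\Phi\circ(l,l^*)=f\circ\Phi$ on the spanning set all match the bookkeeping the paper leaves implicit.
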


\begin{figure}[!hbp]
\centering
\begin{tikzpicture}[auto, node distance=3cm,>=latex']
    \node (UU1){$U\times U^*$};
    \node (V1) [below of=UU1] {$V$};
    \node (UU2) [right of=UU1] {$U\times U^*$};
    \node (V2) [right of=V1] {$V$};
    
    \path[->] (UU1) edge node {$\Phi$} (V1);
    \path[->] (V1) edge  node {$f$} (V2);
    \path[->] (UU2) edge  node {$\Phi$} (V2);
    \path[->] (UU1) edge  node {$(l,l^*)$} (UU2);
\end{tikzpicture}
\caption{Operator polarization}
\label{symspectral}
\end{figure}


\begin{thebibliography}{99}
\bibitem{LR} Lancaster, Peter \& Rodman, Leiba. Canonical forms for symmetric/skew-symmetric real matrix pairs under strict equivalence and congruence, \emph{Linear Algebra and its Applications} 406 0 (2005): 1-76.
\bibitem{S} Sergeichuk, Vladimir V. Classification problems for systems of forms and linear mappings, \emph{Izv. Akad. Nauk SSSR Ser. Mat.} 51 (1987) 6: 1170--1190; translation in \emph{Math. USSR-Izv.} 31 (1988): 481—501. Available at: arXiv 0801.0823.

\end{thebibliography}
\end{document}